\documentclass{article}
\usepackage [utf8] {inputenc}
\usepackage{mathtools}
\usepackage{amsthm}
\usepackage{amssymb}
\usepackage{enumerate}

\newtheorem{theorem}{Theorem}
\newtheorem{proposition}{Proposition}
\newtheorem{lemma}{Lemma}
\newtheorem{corollary}{Corollary}

\DeclareMathOperator{\Irr}{Irr}
\DeclareMathOperator{\Cay}{Cay}

\title{Expanders and growth of normal subsets in finite simple groups of Lie type}
\author{Saveliy V. Skresanov
\thanks{The research was carried out within the framework of the Sobolev Institute of Mathematics state contract (project FWNF-2022-0002).}}
\date{}

\begin{document}
\maketitle

\begin{abstract}
	We show that some classical results on expander graphs imply growth results on normal subsets in finite simple groups.
	As one application, it is shown that given a nontrivial normal subset \( A \) of a finite simple group \( G \) of Lie type of bounded rank,
	we either have \( G \setminus \{ 1 \} \subseteq A^2 \) or \( |A^2| \geq |A|^{1+\epsilon} \), for \( \epsilon > 0 \).
	This improves a result of Gill, Pyber, Short and Szab\'o, and partially resolves a question of Pyber from the Kourovka notebook.
	We also propose a variant of Gowers' trick for two subsets, and give applications to products of large subsets in groups of Lie type,
	improving some results of Larsen, Shalev and Tiep.
\end{abstract}

\section{Introduction}

Growth of subsets in finite simple groups has been an active area of research in the recent years.
In the most general setting the problem can be stated as follows. Given two subsets \( A, B \) of a finite group \( G \), let
\[ AB = \{ ab \mid a \in A,\, b \in B \} \text{ and } A^{-1} = \{ a^{-1} \mid a \in A \} \]
denote their product and inverse, respectively. We want to know how large \( |AB| \) is in comparison to \( |A| \) and \( |B| \),
perhaps under some additional conditions like \( A = B \) and \( A^{-1} = A \).
Of course, for nonempty subsets \( |AB| \) is always at least \( \max \{ |A|, |B| \} \) and at most \( |A| \cdot |B| \), but in many cases one can prove more.

One of such cases arises in the study of Babai's conjecture on diameters of Cayley graphs of finite simple groups.
Recall that in~\cite{babaiSeress} it is conjectured that a connected Cayley graph of a nonabelian finite simple group \( G \) has polylogarithmic diameter.
That is, there exist universal constants \( C, c > 0 \) such that for any symmetric generating set \( S \subseteq G \) we have \( S^n = G \),
where \( n \leq C \cdot (\log |G|)^c \) and \( S^n = SS \cdots S \) denotes the \( n \)-fold product set.
In the breakthrough work of Helfgott~\cite{helfgott} it was shown that for \( G = \mathrm{PSL}_2(p) \), where \( p \) is a prime,
generating sets exhibit three-step growth, more precisely, we have \( |S^3| \geq |S|^{1+\epsilon} \) or \( G = S^3 \), where \( \epsilon > 0 \) is a constant.
It can be easily seen that this property immediately implies the positive solution of Babai's conjecture in the case of \( \mathrm{PSL}_2(p) \).

Similar growth theorems have been shown independently by Pyber, Szab\'o~\cite{pyberSzabo} and Breuillard, Green, Tao~\cite{BGT}
for groups of Lie type of bounded rank. It is known that such growth results do not hold for groups of Lie type of unbounded rank and
for the alternating groups (see, for instance,~\cite[Example~77]{pyberSzabo}), but if one assumes the generating set \( S \) to be \emph{normal}, i.e.\ \( S \) is a union
of conjugacy classes of \( G \), then the situation becomes much more benign.

Indeed, Babai's conjecture in the case when the generating set is normal was confirmed by Liebeck and Shalev in~\cite{liebeckShalev}.
Two-step expansion of normal subsets in finite simple groups was shown in~\cite{gillPybSS} and improved in~\cite{liebeckSchulShalev} for small subsets.
Results on expansion of two normal subsets include the fundamental results of Shalev on products of conjugacy classes~\cite{shalev},
including applications to word maps and Thompson's conjecture, see a recent survey~\cite{shalevSurvey} for many generalizations and improvements that followed.
Some expansion can be obtained when \( A \) is an arbitrary small subset of \( G \)
and \( B \) is a nontrivial normal subset, see~\cite[Proposition~5.2]{gillPybSS},~\cite[Theorem~1.3]{liebeckSchulShalev} and a recent result by Dona, Mar\'oti and Pyber~\cite{donaMP}.

In this work we will also focus on the expansion of normal subsets. A standard strategy to study the product of normal subsets
is to apply the Frobenius formula, which expresses the product of two conjugacy classes in a finite group in terms of irreducible ordinary characters,
and then to utilize various bounds on character values, such as the results on the Witten zeta function (see, for example,~\cite{witten})
and character ratio bounds (see~\cite{larsenTiep} for a recent strong bound). Here we choose a different strategy and instead appeal to some classical
results about expander graphs. Our main observation is that \emph{the character ratio
\[ R(g) = \max_{\chi \neq 1_G} \left|\frac{\chi(g)}{\chi(1)}\right|, \]
where \( g \in G \) and \( \chi \) ranges over nontrivial irreducible characters of \( G \),
controls the spectral expansion of the Cayley graph \( \Cay(G, g^G) \) over \( G \) with the connection set \( g^G \).} We formulate this observation more precisely in Lemma~\ref{specchi}
of Section~\ref{sPrelim}, but intuitively it implies that the conjugacy class \( g^G \) ``expands'' whenever \( R(g) \) is small enough.
This often holds for conjugacy classes of finite simple groups, which allows us to obtain growth results.

We note that this observation has been used before to show that normal Cayley graphs over finite simple groups of Lie type are expanders, if the
field of definition is large enough~\cite[(3.10)]{lieExpand}. Here we make one more step and show that the expansion properties of Cayley graphs have
some other group-theoretical applications.

Our first result follows from the observation on character ratios together with the well-known fact that spectral expansion implies vertex expansion
(we postpone the proofs of this and other results to Section~\ref{sProofs}).

\begin{theorem}\label{2step}
	Let \( A, B \) be nonempty subsets of a finite group \( G \) of order \( n \), and assume that \( A \) is a normal subset.
	Define
	\[ R = \min_{g \in A} R(g).  \]
	Then
	\[ |AB| \geq \frac{n}{1 + R^2(n/|B| - 1)} \geq \min \left\{ \frac{n}{2},\, \frac{|B|}{2R^2} \right\}. \]
\end{theorem}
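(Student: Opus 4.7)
The plan is to reduce to the case that $A$ is a single conjugacy class and then invoke the classical principle that spectral expansion of a regular graph implies vertex expansion.

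Since $A$ is normal it is a disjoint union of conjugacy classes, so I would pick $g \in A$ realizing $R(g) = R$ and set $C = g^G \subseteq A$. Then $CB \subseteq AB$, so it suffices to prove the first inequality with $A$ replaced by $C$. This reduction is essential: the adjacency operator $M$ of $\Cay(G, C)$ lies in the centre of $\mathbb{C}[G]$ because $C$ is conjugation invariant, hence $M$ is normal, its eigenvalues are indexed by the irreducible characters of $G$, and Lemma~\ref{specchi} tells us that every nontrivial eigenvalue has absolute value at most $R\,|C|$ while the trivial eigenvalue equals $|C|$ with eigenvector the all-ones function $\mathbf{1}$.

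Next I would apply this spectral information to $f = \mathbf{1}_B \in \mathbb{C}^G$. Decomposing $f = (|B|/n)\,\mathbf{1} + f^\perp$ and using normality of $M$ to obtain $\|Mf^\perp\|_2 \leq R\,|C|\,\|f^\perp\|_2$, a short computation with $\|f^\perp\|_2^2 = |B|(1 - |B|/n)$ yields
\[ \|Mf\|_2^2 \leq |C|^2 \left( \frac{|B|^2}{n} + R^2 \, |B| \, \left( 1 - \frac{|B|}{n} \right) \right). \]
On the other hand $Mf$ is supported on $CB$ and its coordinates sum to $|C|\,|B|$, so Cauchy--Schwarz gives $|CB| \cdot \|Mf\|_2^2 \geq (|C|\,|B|)^2$. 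Dividing the two estimates and simplifying produces exactly the first inequality of the theorem.

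For the second inequality I would split on which term in the denominator dominates. If $R^2(n/|B| - 1) \leq 1$ then the denominator is at most $2$, so $|AB| \geq n/2$; otherwise $R^2(n/|B| - 1) \geq 1$, the denominator is at most $2R^2(n/|B| - 1) \leq 2R^2 n/|B|$, and we obtain $|AB| \geq |B|/(2R^2)$. The whole argument is essentially routine once Lemma~\ref{specchi} is in place, so the only real care is in the Cauchy--Schwarz bookkeeping and in checking that the nontrivial eigenvalue bound is applied on the correct invariant subspace $\mathbf{1}^\perp$.
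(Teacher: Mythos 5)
Your proposal is correct and takes essentially the same route as the paper: pick \( g \in A \) attaining \( R = R(g) \), pass to \( C = g^G \subseteq A \), use Lemma~\ref{specchi} (character ratios control the nontrivial spectrum of \( \Cay(G, C) \)), and turn spectral expansion into the stated lower bound on \( |CB| \leq |AB| \). The only difference is that you inline the proof of the spectral-to-vertex-expansion step (orthogonal decomposition of \( \mathbf{1}_B \) plus Cauchy--Schwarz) where the paper simply cites Proposition~\ref{specExp}; just be sure to let \( M \) act on the side for which the resulting vector is supported on \( BC = CB \) rather than on \( BC^{-1} \), since \( A \) need not be inverse-closed.
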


Our main application throughout the text will be to the case when \( G \) is a finite simple group of Lie type over the field of order \( q \),
and \( A \) is a nontrivial normal subset.
By Gluck's bound~\cite{gluck}, for \( g \neq 1 \) we have \( R(g) \leq C/\sqrt{q} \) for some universal constant \( C \), so Theorem~\ref{2step} implies
that either \( |AB| \geq n/2 \) or \( |AB| \geq c \cdot q|B| \) for some constant \( c > 0 \). In fact, it follows from Gluck's work that \( R(g) \leq 19/20 < 1 \)
for any finite simple group of Lie type, so we always get some mild expansion.

In~\cite[Corollary~2.5]{babnikpy} Babai, Nikolov and Pyber proved that for arbitrary nonempty subsets \( A \) and \( B \) we have either \( |AB| \geq n/2 \)
or \( |AB| \geq m|A|/(2n) \cdot |B| \), where \( m \) is the minimum degree of a nontrivial ordinary character of \( G \).
In Section~\ref{remarks} we show that this result and some other results from~\cite{babnikpy} are also a consequence of a (different) bound
on spectral expansion of Cayley graphs. In particular, Theorem~\ref{2step} is an improvement to~\cite[Corollary~2.5]{babnikpy} when \( A \)
is a single conjugacy class of \( G \), since in that case \( R \) is precisely the spectral expansion of \( \Cay(G, A) \).

In~\cite[Theorem~3.3]{gowersTrick} Gowers showed that if \( A, B, C \subseteq G \) are such that \( |A| \cdot |B| \cdot |C| \geq |G|^3 / m \), where
\( m \) is the minimum degree of a nontrivial ordinary character of \( G \), then \( G = ABC \). This useful fact was later called the ``Gowers' trick'',
and was subsequently generalized to an arbitrary number of sets in~\cite[Corollary~2.6]{babnikpy} and recently improved for normal subsets of groups of Lie type in~\cite{marotiFr}.
Unfortunately these results do not yield a nontrivial bound for two subsets. Our next result shows that Theorem~\ref{2step} implies
such a bound.

\begin{corollary}[``Gowers' trick for two subsets'']\label{gowers2}
	Let \( A, B \) be subsets of a finite group \( G \).
	If \( |A| \cdot |B| \geq R(g)^2 \cdot |G|^2 \) for \( g \in G \), \( g \neq 1 \), then \( AB \cap g^G \neq \varnothing \).
	In particular, if \( A \) and \( B \) are normal subsets, then \( g^G \subseteq AB \).
\end{corollary}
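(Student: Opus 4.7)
My plan is to reduce the conclusion $AB \cap g^G \neq \varnothing$ to a size estimate and then invoke Theorem~\ref{2step} with a judicious choice of sets. First, observe that $AB \cap g^G \neq \varnothing$ is equivalent to $A \cap g^G B^{-1} \neq \varnothing$, which in turn is guaranteed by the inclusion--exclusion bound $|A \cap g^G B^{-1}| \geq |A| + |g^G B^{-1}| - |G|$ as soon as this right-hand side is positive. Hence it suffices to prove $|g^G B^{-1}| > |G| - |A|$.

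To estimate $|g^G B^{-1}|$, I apply Theorem~\ref{2step} with the normal set taken to be $g^G$ itself (so that the parameter $R$ of the theorem equals $R(g)$, since the character ratio is constant on a conjugacy class) and the second set taken to be $B^{-1}$ (with $|B^{-1}| = |B|$). Setting $n = |G|$, this yields
\[ |g^G B^{-1}| \geq \frac{n}{1 + R(g)^2(n/|B| - 1)}. \]
A direct rearrangement shows that the required strict inequality $|A| + n/(1 + R(g)^2(n/|B|-1)) > n$ is equivalent to $|A| \cdot |B| > R(g)^2 \cdot (n - |A|)(n - |B|)$. Combining the hypothesis $|A| \cdot |B| \geq R(g)^2 n^2$ with the trivial strict inequality $n^2 > (n-|A|)(n-|B|)$ (valid whenever $|A|, |B| \geq 1$) then delivers exactly this.

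For the ``in particular'' statement, I use that the product of two normal subsets is itself a normal subset, so any nonempty intersection of $AB$ with a conjugacy class $g^G$ automatically forces $g^G \subseteq AB$. I do not expect any serious obstacle here: all of the substantive work is absorbed into Theorem~\ref{2step}, and what remains is one counting step plus the symmetry $|B| = |B^{-1}|$. The only points requiring care are the strict-versus-non-strict inequality bookkeeping and the degenerate cases ($|A| = n$, $|B| = n$, or $R(g) = 0$), all of which are handled directly from the bound supplied by Theorem~\ref{2step}.
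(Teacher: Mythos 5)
Your proposal is correct and follows essentially the same route as the paper: apply Theorem~\ref{2step} to the pair \( g^G \) and \( B^{-1} \) (using \( |B^{-1}|=|B| \) and that \( R \) is constant on a conjugacy class), deduce \( |g^G B^{-1}| > |G| - |A| \), and conclude by pigeonhole that \( A \cap g^G B^{-1} \neq \varnothing \), with normality of \( AB \) giving the final claim. The only difference is cosmetic algebra: you rearrange to the equivalent condition \( |A|\cdot|B| > R(g)^2(|G|-|A|)(|G|-|B|) \), whereas the paper passes through the intermediate bound \( n(1 - R(g)^2(n/|B|-1)) \geq n - |A| \).
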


In~\cite[Theorem~A~(iv)]{larsenShNorm} Larsen, Shalev and Tiep prove that given a finite simple group of Lie type \( G \) of bounded rank,
and two normal subsets \( A, B \subseteq G \) with \( |A|, |B| \geq \epsilon|G| \), \( \epsilon > 0 \), we have \( G \setminus \{1\} \subseteq AB \)
if \( |G| \) is sufficiently large (in terms of \( \epsilon \)). Corollary~\ref{gowers2} and Gluck's bound imply that one does not have to assume that \( G \) has bounded rank,
it is enough for the field of definition to be sufficiently large.

\begin{corollary}\label{lstImp}
	There exists a universal constant \( C > 0 \) such that the following is true.
	Let \( G \) be a finite simple group of Lie type over a field of order \( q \),
	and let \( A \) and \( B \) be normal subsets of \( G \). If \( |A| \cdot |B| \geq C/q \cdot |G|^2 \),
	then \( G \setminus \{ 1 \} \subseteq AB \).
\end{corollary}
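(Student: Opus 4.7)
The plan is to combine Corollary~\ref{gowers2} with Gluck's bound on character ratios. The latter, which is the key input cited already in the discussion following Theorem~\ref{2step}, provides a universal constant \( C' > 0 \) such that for every finite simple group \( G \) of Lie type over a field of order \( q \) and every nontrivial \( g \in G \), one has \( R(g) \leq C'/\sqrt{q} \). Squaring yields the uniform estimate \( R(g)^2 \leq C'^2/q \) valid for all \( g \in G \setminus \{1\} \), independent of \( g \) and of the rank of \( G \).

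With this in hand, set \( C = C'^2 \). For any fixed nontrivial \( g \in G \), the hypothesis \( |A| \cdot |B| \geq C/q \cdot |G|^2 \) gives
\[
|A| \cdot |B| \;\geq\; \frac{C'^2}{q}\, |G|^2 \;\geq\; R(g)^2 \cdot |G|^2,
\]
which is precisely the assumption of Corollary~\ref{gowers2}. Since both \( A \) and \( B \) are normal subsets, the corollary yields \( g^G \subseteq AB \).

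As \( g \) was an arbitrary nontrivial element, taking the union of the conjugacy classes \( g^G \) over all \( g \in G \setminus \{1\} \) gives \( G \setminus \{1\} \subseteq AB \), as required. There is no genuine obstacle here: the content of the result sits entirely in Corollary~\ref{gowers2} and Gluck's theorem, and the only thing to verify is that the constants can be matched uniformly over all finite simple groups of Lie type and all \( q \), which is exactly the strength of Gluck's bound. The one point to flag is that the constant \( C \) produced this way is not explicit, since it inherits the (non-explicit but universal) constant from Gluck's theorem.
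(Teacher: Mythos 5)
Your proposal is correct and follows exactly the paper's own argument: apply Corollary~\ref{gowers2} pointwise to each nontrivial \( g \), using Gluck's bound (Proposition~\ref{gluckB}) to ensure \( R(g)^2 \leq C'^2/q \), and take \( C = C'^2 \). The paper's proof is a one-line version of the same reasoning, so there is nothing further to add.
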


Note that the proof of~\cite[Theorem~A~(iv)]{larsenShNorm} is rather involved and requires algebraic geometry, while our argument is elementary, aside from using Gluck's bound.

As an application of Corollary~\ref{lstImp}, we partially resolve a question of Pyber from the Kourovka notebook.
In~\cite[20.74~(a)]{kourovka} he asks if given a nonabelian finite simple group \( G \) and a normal subset \( A \)
with \( |A| > |G|/\log_2|G| \) and \( A^{-1} = A \) we have \( A^2 = G \) for \( |G| \) large enough.
In~\cite[20.74~(b)]{kourovka} the same question is posed in the particular case of \( G = \mathrm{PSL}_2(p) \).
Pyber's question was recently solved in the positive for alternating groups, see~\cite[Theorem~4]{larsenTiepAlt},
and we settle it for finite simple groups of Lie type of bounded rank.

\begin{corollary}\label{pyberConj}
	Let \( G \) be a finite simple group of Lie type of bounded rank,
	and let \( A \subseteq G \) be a normal subset satisfying \( |A| > |G|/\log_2 |G| \) and \( A^{-1} = A \).
	If \( |G| \) is large enough in terms of the rank, then \( A^2 = G \).
\end{corollary}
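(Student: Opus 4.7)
The plan is to derive Corollary~\ref{pyberConj} as a direct consequence of Corollary~\ref{lstImp}, using the symmetry $A^{-1} = A$ only to recover the identity element.

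First I would fix the rank $r$ of $G$ and denote by $q$ the size of its field of definition. Standard order formulas for finite simple groups of Lie type give a polynomial bound $|G| \leq q^{h(r)}$ for some function $h$ depending only on $r$, so $\log_2 |G| \leq h(r) \log_2 q$. In particular, once $r$ is fixed, the hypothesis ``$|G|$ large in terms of the rank'' is equivalent to ``$q$ large in terms of the rank''.

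Next I would verify the hypothesis of Corollary~\ref{lstImp} with $B = A$. The assumption $|A| > |G|/\log_2 |G|$ gives
\[ |A|^2 \;>\; \frac{|G|^2}{(\log_2 |G|)^2} \;\geq\; \frac{|G|^2}{h(r)^2 (\log_2 q)^2}. \]
To apply Corollary~\ref{lstImp} it is enough that $h(r)^2 (\log_2 q)^2 \leq q/C$, where $C$ is the universal constant from that corollary. This inequality holds whenever $q$ is large enough in terms of $r$, equivalently whenever $|G|$ is large enough in terms of the rank. Corollary~\ref{lstImp} then yields $G \setminus \{1\} \subseteq A^2$.

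Finally, $A$ is nonempty and $A = A^{-1}$, so any $a \in A$ satisfies $a^{-1} \in A$ and therefore $1 = a a^{-1} \in A^2$. Combined with the previous step this gives $A^2 = G$. The substantive work is concentrated in Corollary~\ref{lstImp} (and hence in Gluck's bound together with Theorem~\ref{2step}); there is no real obstacle in the present deduction. The only point needing attention is the translation between ``$|G|$ large'' and ``$q$ large'', which is precisely where the bounded-rank assumption is used and which is the reason the statement is currently restricted to bounded rank rather than extended to all finite simple groups of Lie type.
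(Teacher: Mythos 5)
Your proposal is correct and follows essentially the same route as the paper: apply Corollary~\ref{lstImp} with \( B = A \), using the bound \( |G| \leq q^{c} \) (\( c \) depending only on the rank) to show that \( |A| > |G|/\log_2|G| \) forces \( |A|^2 \geq C/q \cdot |G|^2 \) once \( q \) is large, and then use \( A^{-1} = A \) to recover the identity in \( A^2 \). The only difference is cosmetic: you make the step \( 1 = aa^{-1} \in A^2 \) explicit, which the paper leaves implicit.
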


Note that part~(b) of Pyber's question about \( G = \mathrm{PSL}_2(p) \) follows from the above corollary.

It is often important not only to decide when a given \( g \in G \) lies in the product of subsets \( A, B \subseteq G \),
but also to obtain an asymptotic formula for the number of decompositions of the form \( g = ab \), where \( a \in A \) and \( b \in B \),
see, for instance,~\cite[Question~1.4]{shalevSurvey}.
Let \( P_{A,B}(g) \) denote the probability that \( g = ab \) where \( a \in A \), \( b \in B \) are chosen uniformly independently at random,
in other words
\[ P_{A,B}(g) = \frac{1}{|A|\cdot |B|} |\{ (a, b) \in A \times B \mid g = ab \} |. \]
Using the expander mixing lemma (Proposition~\ref{expMix}) we can improve Corollary~\ref{gowers2}
and show that \( P_{A,B}(g) \) for nontrivial \( g \) behaves close to a uniform distribution.

\begin{theorem}\label{asymp}
	Let \( A, B \) be nonempty normal subsets of a finite group \( G \).
	For \( g \in G \) we have
	\[ \left| P_{A,B}(g) - \frac{1}{|G|} \right| < \frac{R(g)}{\sqrt{|A|\cdot |B|}}. \]
\end{theorem}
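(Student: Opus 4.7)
The plan is to interpret $P_{A,B}(g)$ as an edge count in the normal Cayley graph $\Gamma = \Cay(G, g^G)$, and then invoke the expander mixing lemma (Proposition~\ref{expMix}) as suggested. By Lemma~\ref{specchi} (and the fact that for a normal connection set the eigenvalues of the adjacency operator are $|g^G|\chi(g)/\chi(1)$ indexed by $\chi \in \Irr(G)$), the graph $\Gamma$ is $|g^G|$-regular and every non-trivial spectral value has modulus at most $|g^G|\, R(g)$.

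The bridge from edge counts to $P_{A,B}(g)$ uses the normality of $A$ and $B$. Writing out
\[ e(A^{-1}, B) = \bigl|\{(x,y) \in A^{-1} \times B : x^{-1}y \in g^G\}\bigr| \]
and substituting $a = x^{-1}$ turns the edge condition into $ab \in g^G$, giving
\[ e(A^{-1}, B) = \bigl|\{(a,b) \in A \times B : ab \in g^G\}\bigr|. \]
Since $A$ and $B$ are unions of conjugacy classes, the function $h \mapsto P_{A,B}(h)$ is constant on $g^G$, so the right-hand side equals $|g^G| \cdot |A| \cdot |B| \cdot P_{A,B}(g)$. Applying Proposition~\ref{expMix} to the sets $A^{-1}$ and $B$ in $\Gamma$ (using $|A^{-1}| = |A|$) yields
\[ \left| e(A^{-1}, B) - \frac{|g^G|\,|A|\,|B|}{|G|} \right| \le |g^G|\,R(g)\,\sqrt{|A|\,|B|\,(1 - |A|/|G|)(1 - |B|/|G|)}. \]
Dividing both sides by $|g^G|\,|A|\,|B|$ and noting that $(1-|A|/|G|)(1-|B|/|G|) < 1$ whenever $A, B$ are nonempty gives the claimed strict inequality.

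The only subtle point is that if $g$ is not real then $g^G$ is not closed under inversion and $\Gamma$ is directed, so one has to apply the form of Proposition~\ref{expMix} phrased in terms of singular values. This is not an obstacle because the adjacency operator of a normal Cayley graph commutes with the left regular representation and is therefore a normal operator; its singular values are $|\lambda_\chi| = |g^G|\,|\chi(g)/\chi(1)|$, so the second-largest one is still bounded by $|g^G|\,R(g)$, and the bound above applies verbatim.
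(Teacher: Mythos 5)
Your argument is correct and follows essentially the same route as the paper: the digraph \( \Cay(G, g^G) \), the identity \( e(A^{-1},B) = |g^G|\,|A|\,|B|\,P_{A,B}(g) \) coming from normality of \( A \) and \( B \), and the expander mixing lemma applied to \( A^{-1} \) and \( B \), with Lemma~\ref{specchi} supplying the spectral bound. Your closing paragraph is redundant, since Proposition~\ref{expMix} and Lemma~\ref{specchi} are already stated for digraphs; note only that commuting with the left regular representation does not by itself make the adjacency operator normal---what is actually needed is that \( \sum_{s \in g^G} s \) is central in \( \mathbb{C}[G] \), exactly as in the proof of Lemma~\ref{specchi}.
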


Similarly to~\cite[Theorem~A~(iv)]{larsenShNorm}, in the case of groups of Lie type
we can also obtain an asymptotic formula for \( P_{A,B}(g) \), when \( |A| \geq \epsilon |G| \), \( |B| \geq \epsilon |G| \) for some \( \epsilon > 0 \).

\begin{corollary}\label{asympBig}
	Let \( G \) be a finite simple group of Lie type over the field of order~\( q \). Assume that \( A, B \subseteq G \) are normal subsets
	with \( |A| \geq \epsilon |G| \), \( |B| \geq \epsilon |G| \) for some \( \epsilon > 0 \). Then for any \( g \in G \), \( g \neq 1 \) we have
	\[ P_{A,B}(g) = \frac{1}{|G|}\left(1 + O\left(\frac{1}{\epsilon \sqrt{q}}\right)\right). \]
\end{corollary}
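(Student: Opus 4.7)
The plan is to apply Theorem~\ref{asymp} directly and then invoke Gluck's bound on character ratios, which has already been discussed in the paragraph following Theorem~\ref{2step}.

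First I would substitute the hypotheses \( |A| \geq \epsilon |G| \) and \( |B| \geq \epsilon |G| \) into the conclusion of Theorem~\ref{asymp}. This gives
\[ \left| P_{A,B}(g) - \frac{1}{|G|} \right| < \frac{R(g)}{\sqrt{|A|\cdot|B|}} \leq \frac{R(g)}{\epsilon |G|} \]
for any \( g \in G \).

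Next, since \( G \) is a finite simple group of Lie type over a field of order \( q \), and \( g \neq 1 \), Gluck's bound yields \( R(g) \leq C/\sqrt{q} \) for some universal constant \( C > 0 \). Plugging this in gives
\[ \left| P_{A,B}(g) - \frac{1}{|G|} \right| < \frac{C}{\epsilon \sqrt{q} \cdot |G|}, \]
which can be rewritten as
\[ P_{A,B}(g) = \frac{1}{|G|}\left(1 + O\left(\frac{1}{\epsilon \sqrt{q}}\right)\right), \]
as required.

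There is essentially no obstacle here: the corollary is a clean specialization of Theorem~\ref{asymp} to groups of Lie type via Gluck's bound, mirroring how Corollary~\ref{lstImp} is deduced from Corollary~\ref{gowers2}. The only thing worth double-checking is that the implied constant in the big-\( O \) is universal (depending only on the constant from Gluck's bound), which is clear from the derivation.
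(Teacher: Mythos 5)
Your proposal is correct and matches the paper's own proof, which is exactly the one-line deduction ``Follows directly from Theorem~\ref{asymp} and Proposition~\ref{gluckB}''; your substitution \( \sqrt{|A|\cdot|B|} \geq \epsilon|G| \) and the application of Gluck's bound fill in the same routine details. Nothing further is needed.
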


In the case when the normal subsets are images of group words and the group has bounded rank, we obtain a version of~\cite[Theorem~8.1~(i)]{larsenShNorm}
with a more explicit bound.

\begin{corollary}\label{words}
	Let \( G \) be a finite simple group of Lie type of bounded rank over the field of order~\( q \). Let \( w_1, w_2 \) be nonidentity group words.
	Then for any \( g \in G \), \( g \neq 1 \) we have
	\[ P_{w_1(G),w_2(G)}(g) = \frac{1}{|G|}\left(1 + O\left(\frac{1}{\sqrt{q}}\right)\right), \]
	where the implied constant in big-O depends on the rank of \( G \) only.
\end{corollary}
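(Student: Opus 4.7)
The plan is to reduce Corollary \ref{words} to Corollary \ref{asympBig} by showing that the two word images $w_1(G), w_2(G)$ are normal subsets of $G$ whose densities are bounded below by a positive constant depending only on $w_1, w_2$ and the rank. Once this is established, Corollary \ref{asympBig} applied with such an $\epsilon > 0$ immediately yields the asymptotic formula, with the implicit constant absorbed into the dependence on the words and the rank.

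First, each $w_i(G)$ is automatically a normal subset: the identity $w(g_1^h,\dots,g_d^h)=w(g_1,\dots,g_d)^h$ shows that the image of a word map is closed under $G$-conjugation. For the density bound, I would appeal to Borel's dominance theorem for word maps on connected semisimple algebraic groups, which asserts that for a nontrivial word $w$, the morphism $w : \mathbf{G}^d \to \mathbf{G}$ is dominant and hence has image containing a nonempty Zariski-open set $U \subseteq \mathbf{G}$. Combining this with the Lang--Weil estimate applied to $\mathbf{G}$ and to $\mathbf{G} \setminus U$ yields
\[ |w_i(G)| \geq |U(\mathbb{F}_q)| \geq |G| - O(q^{\dim\mathbf{G}-1}) = (1-O(1/q))|G|, \]
with implicit constants depending only on $w_i$ and the rank. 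For $q$ sufficiently large in terms of the words and the rank this gives $|w_i(G)| \geq \frac{1}{2}|G|$, so that Corollary \ref{asympBig} applies with $\epsilon = \frac{1}{2}$; the finitely many remaining small $q$ can be absorbed into the implicit constant, since bounded rank leaves only finitely many groups of Lie type over such fields for each fixed pair of words.

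The main technical obstacle is this density estimate: one must invoke Borel's theorem together with Lang--Weil to secure a $q$-uniform lower bound on $|w_i(G)|/|G|$. This is classical for fixed nontrivial words and bounded rank, but the dependence on $q$ and on the word must be tracked carefully. Alternatively, one may quote an off-the-shelf density statement on word images in bounded-rank Lie-type groups from the literature (for instance, from the work of Larsen, Shalev and Tiep already cited in the paper) and feed it directly into Corollary \ref{asympBig}.
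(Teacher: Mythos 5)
Your overall route is the same as the paper's: observe that word images are normal subsets, get a lower bound \( |w_i(G)| \geq \epsilon |G| \) with \( \epsilon \) controlled in terms of the rank, and feed this into Corollary~\ref{asympBig}. The paper does exactly this in two lines, quoting Larsen's theorem (the reference \cite{larsenWords} in the bibliography) for the density bound; your fallback option of ``quoting an off-the-shelf density statement'' is therefore precisely the intended proof.

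Your attempted self-contained derivation of the density bound, however, has a genuine gap. Borel's theorem gives that \( w : \mathbf{G}^d \to \mathbf{G} \) is dominant, so the image contains a nonempty open set \( U \) over the algebraic closure; but a point \( g \in U(\mathbb{F}_q) \) is then only known to be a value \( w(x_1,\dots,x_d) \) with \( x_i \in \mathbf{G}(\overline{\mathbb{F}_q}) \), not with \( x_i \) rational. Hence the key inequality \( |w_i(G)| \geq |U(\mathbb{F}_q)| \) does not follow from what you have said: Lang--Weil applied to \( \mathbf{G} \) and \( \mathbf{G}\setminus U \) counts rational points of the target, not rational points of the image of the rational word map. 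To repair this one must produce rational points in the fibres, e.g.\ by applying Lang--Weil to a geometrically irreducible component of \( w^{-1}(g) \) (or to a suitable \( \dim\mathbf{G} \)-dimensional subvariety of \( \mathbf{G}^d \) on which \( w \) is generically finite), keeping track of fields of definition, Galois twisting for the twisted groups, and the passage to the simple quotient; this is exactly the content of Larsen's theorem, so the ``sketch'' essentially re-proves the quoted result and cannot be treated as a routine step. A second, smaller discrepancy: your argument yields constants (both the density threshold in \( q \) and the absorbed small-\( q \) cases) depending on the words \( w_1, w_2 \) as well as the rank, whereas the statement, following the paper's use of Larsen's theorem, asserts dependence on the rank only; if you go the Borel--Lang--Weil route you should either track this uniformity or weaken the stated dependence.
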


Our final application of results on expanders concerns the growth of squares of normal subsets in groups of Lie type.
Gill, Pyber, Short and Szab\'o~\cite{gillPybSS} proved that given a nontrivial normal subset \( A \) of a finite simple group \( G \),
we either have \( |A^2| \geq |A|^{1+\epsilon} \) or \( A^b = G \) for some universal constants \( \epsilon > 0 \), \( b \geq 1 \).
In the case of finite simple groups of Lie type of bounded rank we can take \( b = 2 \).

\begin{theorem}\label{boundrank}
	Let \( G \) be a finite simple group of Lie type, and let \( A \) be a nontrivial normal subset of \( G \).
	Then either \( |A^2| \geq |A|^{1+\epsilon} \) or \( G \setminus \{ 1 \} \subseteq A^2 \), for some constant \( \epsilon > 0 \)
	depending on the rank of \( G \) only.
\end{theorem}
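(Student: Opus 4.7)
The argument splits into two regimes depending on whether $|A|$ is large or small relative to $|G|$. Let $C_0$ be the universal constant from Corollary~\ref{lstImp}. If $|A|^2 \geq (C_0/q)|G|^2$, then that corollary applied with $B = A$ immediately yields $G \setminus \{1\} \subseteq A^2$, which is the second alternative. Otherwise, writing $x := |G|/|A|$, we have $x > \sqrt{q/C_0}$; since $A$ is nontrivial it contains some $g \neq 1$, and Gluck's bound gives $R := \min_{g' \in A} R(g') \leq C_1/\sqrt q$ for an absolute constant $C_1$. Substituting $B = A$ into Theorem~\ref{2step} and estimating $1 + R^2(x-1) \leq 1 + R^2 x$,
\[ \frac{|A^2|}{|A|} \;\geq\; \frac{x}{1+R^2(x-1)} \;\geq\; \frac{1}{1/x + R^2} \;\geq\; \frac{\sqrt q}{2\sqrt{C_0}} \]
as soon as $q$ exceeds some absolute constant.

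For $G$ of bounded rank we have $|G| \leq c_r q^D$ with $D$ and $c_r$ depending only on the rank, so $|A|^{\epsilon} \leq (c_r q^D)^{\epsilon}$. Setting $\epsilon := 1/(3D)$ makes this at most $q^{1/3}$ up to a rank-dependent constant, which is dominated by $\sqrt q/(2\sqrt{C_0})$ once $q$ exceeds some threshold $Q_0 = Q_0(\text{rank})$. In that range $|A^2| \geq |A|^{1+\epsilon}$, completing the proof for $q \geq Q_0$.

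Only finitely many simple groups of bounded rank have $q < Q_0$, and these I would absorb via the following structural claim: if $A$ is a nontrivial normal subset of such a $G$ with $G \setminus \{1\} \not\subseteq A^2$, then $|A^2| > |A|$. Indeed, $|A^2| = |A|$ would force $aA = A^2$ for every $a \in A$ (both have size $|A|$), so writing $H := \{h \in G : hA = A\}$ one obtains $A = Ha_0$ for any fixed $a_0 \in A$. Conjugation-invariance of $A$ then gives $(gHg^{-1})(ga_0g^{-1}) = Ha_0$; a right coset determines its subgroup, so $gHg^{-1} = H$ for every $g \in G$, making $H$ normal in $G$. Simplicity forces $H = 1$, so $A$ is a central singleton (impossible in a nonabelian simple group), or $H = G$, so $A = G$ and hence $G \setminus \{1\} \subseteq A^2$. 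The ratios $\log(|A^2|/|A|)/\log|A|$ over the finitely many exceptional pairs $(G,A)$ are therefore strictly positive, and one takes the final $\epsilon$ to be the minimum of $1/(3D)$ and these ratios.

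The main obstacle is precisely this small-$q$ treatment: the inequalities produced by Theorem~\ref{2step} and Gluck's bound outrun $|A|^\epsilon$ only once $q$ exceeds a rank-dependent absolute threshold, so the coset argument is needed in order to collapse the remaining finitely many cases into a single $\epsilon$ depending only on the rank.
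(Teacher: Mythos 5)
Your proposal is correct and follows essentially the same route as the paper: the paper proves a dichotomy (Lemma~\ref{dich}) splitting at \( |A| \geq R|G| \) with \( R = \max_{g \neq 1} R(g) \), handling the large case via Corollary~\ref{gowers2} and the small case via Theorem~\ref{2step} with \( B = A \), and then converts the resulting \( \sqrt{q} \)-fold growth into the exponent \( \epsilon \) using Gluck's bound (Proposition~\ref{gluckB}) and \( |G| \leq q^{cr^2} \) — the same split, with the same ingredients, as your threshold \( |A| \geq \sqrt{C_0/q}\,|G| \). The one place you go beyond the paper is the explicit small-\( q \) regime: the paper's closing inequality \( \sqrt{q}/(2C) \geq |G|^{\epsilon} \) is asserted with no lower bound on \( q \) and really requires \( q \) to be large compared with Gluck's constant, so your elementary coset argument (showing \( |A^2| = |A| \) would force \( A \) to be a coset of a normal subgroup, hence \( A = G \)) is a sound and genuinely useful patch that collapses the finitely many bounded-rank, small-\( q \) groups into the final rank-dependent \( \epsilon \), rather than a different method.
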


Note that it is not always possible to replace the conclusion \( G \setminus \{ 1 \} \subseteq A^2 \) with \( G = A^2 \).
Recall that an element of a group is called \emph{real} if it is conjugate to its inverse.
In the case of \( G = \mathrm{PSL}_2(q) \) one can easily show that every semisimple element is real, so there exists a universal constant \( C > 0 \)
such that any normal subset \( A \subseteq G \) with \( |A| \geq C \cdot q^2 \) must contain a real element, and hence \( G = A^2 \) follows.
For \( G = \mathrm{PSL}_3(q) \) the situation is different. One can show that \( G \) contains at most \( O(q) \) conjugacy classes of real elements
(see~\cite[Table~2]{psl3table} or~\cite[Section~13]{gillSingh}) and since every conjugacy class in \( G \) has size at most \( O(q^6) \),
the group has at most \( O(q^7) \) real elements. On the other hand, \( |G| = \frac{1}{\gcd(3, q-1)} q^8 - O(q^6) \), in particular,
\( G \) contains at least \( |G|(1 - C/q) \) elements which are not real for some universal constant \( C > 0 \).
Hence there exists a normal subset \( A \subseteq G \) of size at least \( \frac{|G|}{2}(1 - C/q) \) such that \( 1 \not\in A^2 \);
note that for any \( \nu < 1/2 \) we can choose \( A \) to be bigger than \( \nu|G| \) for \( q \) large enough.
The same example also shows why it is important to exclude the identity element in Corollaries~\ref{lstImp} and~\ref{asympBig}.

In general, the abundance of real elements in a group of Lie type depends on the type of the group. In~\cite[Lemma~10]{galt} it is shown
that in finite simple groups of Lie type besides the groups of types \( A_n \), \( ^2A_n \), \( D_{2n+1} \), \( ^2D_{2n+1} \), \( E_6 \) and \( ^2E_6 \) every semisimple element is real.
By~\cite{guralnickLub}, a group of Lie type \( G \) defined over a field of order \( q \) has at least \( |G|(1 - C/q) \) semisimple elements
for some universal constant \( C > 0 \). It follows that in groups not of the six types above almost any element is real,
and hence we can strengthen Theorem~\ref{boundrank} a bit in this case, by showing that for \( A \subseteq G \) we either have \( |A^2| \geq |A|^{1+\epsilon} \)
or \( G = A^2 \).

The structure of the paper is as follows. In Section~\ref{sPrelim} we remind some basic properties of expanders and
prove the relation between character ratios and spectral expansion of normal Cayley graphs (Lemma~\ref{specchi}).
In Section~\ref{sProofs} we give the proofs of our main results and in Section~\ref{remarks} we note how some other results
on expansion in finite simple groups fit into the framework of expander graphs.

\section{Preliminaries}\label{sPrelim}

We follow~\cite[Chapter~4]{pseudo} for our exposition of expanders.
Let \( \Gamma = (V, E) \) be a directed graph (shortly, a \emph{digraph}) with the vertex set \( V \) and the arc set \( E \subseteq V \times V \).
Suppose that \( \Gamma \) is regular of valency \( d \), i.e.\ for every vertex \( x \in V \) there are exactly \( d \) arcs with
the beginning at \( x \). Set \( n = |V| \). Let \( M \) denote the \emph{random-walk matrix} of the digraph, that is,
\( M \) is a \( n \times n \) matrix where \( M_{x, y} \), \( x, y \in V \), is equal to \( 1/d \) if \( (x, y) \in E \) and \( 0 \) otherwise. Define
\[ \lambda(\Gamma) = \max_{x \perp u } \frac{\|xM\|}{\|x\|}, \]
where \( u = (1/n, \dots, 1/n) \in \mathbb{R}^n \) and \( x \in \mathbb{R}^n \) ranges over all vectors orthogonal to \( u \).
The norm \( \| \cdot \| \) denotes the Euclidean norm.

It can be easily seen that \( 1 \) is always an eigenvalue of \( M \) with an eigenvector~\( u \).
When \( \Gamma \) is an undirected graph, \( \lambda(\Gamma) \) is the second largest eigenvalue of \( M \) by absolute value, see~\cite[Lemma~2.55]{pseudo}.
By~\cite[Problem~2.10~(2)]{pseudo}, for an arbitrary digraph, \( \lambda(G) = \sqrt{\lambda_2} \), where \( \lambda_2 \) is the second largest eigenvalue of \( MM^t \); recall that this
matrix is positive semidefinite. We always have \( \lambda(\Gamma) \leq 1 \), and we call \( \lambda(\Gamma) \) the \emph{spectral expansion}
of \( \Gamma \). Note that in~\cite[Definition~4.5]{pseudo} spectral expansion is defined in terms of the spectral gap \( 1 - \lambda(\Gamma) \),
but our choice of terminology should not lead to any confusion (see the footnote to~\cite[Definition~4.5]{pseudo}).

We say that a digraph \( \Gamma \) is a \( (N, a) \) \emph{vertex expander}, if for all \( A \subseteq V \)
with \( |A| \leq N \), the neighborhood \( N(A) = \{ y \in V \mid (x, y) \in E,\, \text{ for some } x \in A \} \) satisfies \( |N(A)| \geq a|A| \).

\begin{proposition}[{Spectral expansion implies vertex expansion~\cite[Theorem~4.6]{pseudo}}]\label{specExp}
	If \( \Gamma \) is a regular digraph on \( n \) vertices, then for all \( 0 \leq \alpha \leq 1 \) the digraph \( \Gamma \) is a
	\( (\alpha n, 1/((1-\alpha)\lambda(\Gamma)^2 + \alpha)) \) vertex expander.
\end{proposition}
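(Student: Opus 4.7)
The plan is to reduce vertex expansion to a second-moment/Cauchy--Schwarz estimate applied to the indicator vector of \( A \), using the spectral gap to control the component of this vector orthogonal to the uniform distribution. Fix \( A \subseteq V \) with \( |A| = \alpha n \), set \( B = N(A) \), and let \( \chi_A \in \mathbb{R}^n \) be the row indicator vector of \( A \). The key observation is that \( \chi_A M \) is supported in \( B \): indeed, if \( y \notin B \) then no arc from a vertex in \( A \) ends at \( y \), so \( (\chi_A M)_y = \sum_{x \in A} M_{x,y} = 0 \). This lets us bound \( |B| \) from below provided we can bound \( \|\chi_A M\|_2 \) from above and \( \|\chi_A M\|_1 \) from below.

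Next I would decompose \( \chi_A = \alpha \mathbf{1} + z \), where \( \mathbf{1} = (1,\dots,1) \) and \( z \) is orthogonal to \( \mathbf{1} \) (equivalently, orthogonal to \( u \)). A direct computation gives \( \|z\|^2 = |A| - \alpha^2 n = \alpha(1-\alpha)n \). Since \( M \) is (in- and out-) regular of valency \( d \), one has \( \mathbf{1} M = \mathbf{1} \), so \( \chi_A M = \alpha \mathbf{1} + z M \); the orthogonality of these two summands is preserved because the uniform vector is a left eigenvector of \( M \). Applying the definition of \( \lambda(\Gamma) \) yields \( \|z M\| \leq \lambda(\Gamma) \|z\| \), and therefore
\[ \|\chi_A M\|_2^2 = \alpha^2 n + \|zM\|^2 \leq \alpha^2 n + \lambda(\Gamma)^2 \alpha(1-\alpha)n. \]

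On the other hand, since every row of \( M \) sums to \( 1 \), we have \( \|\chi_A M\|_1 = \sum_y (\chi_A M)_y = |A| = \alpha n \), and by Cauchy--Schwarz applied to the support \( B \) of \( \chi_A M \), we get \( \|\chi_A M\|_1^2 \leq |B|\cdot \|\chi_A M\|_2^2 \). Combining the two inequalities,
\[ |B| \geq \frac{(\alpha n)^2}{\alpha n\bigl(\alpha + (1-\alpha)\lambda(\Gamma)^2\bigr)} = \frac{|A|}{(1-\alpha)\lambda(\Gamma)^2 + \alpha}, \]
which is exactly the claimed vertex expansion factor.

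The routine part is the Cauchy--Schwarz step and the norm bookkeeping; the only genuine subtlety is ensuring that \( \mathbf{1} \) is truly a left eigenvector of \( M \), i.e.\ that the digraph is in-regular as well as out-regular, so that the decomposition \( \chi_A = \alpha \mathbf{1} + z \) is preserved under right multiplication by \( M \) and the definition of \( \lambda(\Gamma) \) can be applied to \( z \). For the Cayley graphs considered in the paper this is automatic, and in the general digraph setting it is typically folded into the standing regularity assumption.
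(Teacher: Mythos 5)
Your argument is correct and is essentially the standard proof of the cited result (Vadhan's Theorem~4.6), which the paper quotes without reproving: bound the support of \( \chi_A M \) via Cauchy--Schwarz (equivalently, a collision-probability bound) and control the component of \( \chi_A \) orthogonal to the uniform vector by \( \lambda(\Gamma) \). Two small remarks that fit your write-up: the definition of an \( (\alpha n, a) \) vertex expander requires the bound for all \( |A| \leq \alpha n \), not just \( |A| = \alpha n \), but since \( \lambda(\Gamma) \leq 1 \) the factor \( 1/((1-\beta)\lambda(\Gamma)^2 + \beta) \) is decreasing in \( \beta = |A|/n \), so your case implies the rest; and your caveat about in-regularity (needed for \( \mathbf{1}M = \mathbf{1} \), while out-regularity gives \( M\mathbf{1}^t = \mathbf{1}^t \)) is exactly right and is automatic for the Cayley digraphs to which the paper applies the proposition.
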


Given \( A, B \subseteq V \), let \( e(A, B) = |E \cap (A \times B)| \) denote the number of arcs from \( A \) to \( B \).

\begin{proposition}[{Expander mixing lemma~\cite[Lemma~4.15]{pseudo}}]\label{expMix}
	If \( \Gamma = (V, E) \) is a regular digraph of valency \( d \) on \( n \) vertices, then for all \( A, B \subseteq V \)
	with \( \alpha = |A|/n \), \( \beta = |B|/n \) we have
	\[ \left| \frac{e(A, B)}{dn} - \alpha \beta \right| \leq \lambda(\Gamma)\sqrt{\alpha(1-\alpha)\beta(1-\beta)}. \]
\end{proposition}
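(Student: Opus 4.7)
The plan is to translate the arc count $e(A,B)$ into a quadratic form involving the random-walk matrix $M$ of $\Gamma$, and then to exploit that the indicator vector of any subset differs from its ``uniform part'' only by a vector orthogonal to $u$, which $M$ contracts by at most $\lambda(\Gamma)$. Writing $\mathbf{1}_A, \mathbf{1}_B \in \mathbb{R}^n$ for the indicator row vectors of $A$ and $B$, the definition of $M$ gives directly
\[ e(A,B) = d \cdot \mathbf{1}_A M \mathbf{1}_B^T. \]

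Next I would decompose $\mathbf{1}_A = \alpha \mathbf{1} + y_A$ and $\mathbf{1}_B = \beta \mathbf{1} + y_B$, where $\mathbf{1} = (1,\dots,1)$ and $y_A, y_B$ are orthogonal to $\mathbf{1}$ (equivalently, to $u$). Expanding the bilinear form into four pieces and using that $\mathbf{1}$ is both a right and a left eigenvector of $M$ with eigenvalue $1$ (the right one by out-regularity, the left one in the regular-digraph setting of \cite[Lemma~4.15]{pseudo}), the two ``mixed'' terms collapse by orthogonality, the constant term contributes $\alpha \beta n$, and one is left with
\[ \mathbf{1}_A M \mathbf{1}_B^T = \alpha \beta n + y_A M y_B^T. \]

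To control the error, I would apply Cauchy-Schwarz and then the definition of $\lambda(\Gamma)$: since $y_A \perp u$, $\|y_A M\| \leq \lambda(\Gamma) \|y_A\|$, and hence $|y_A M y_B^T| \leq \lambda(\Gamma) \|y_A\| \|y_B\|$. A short direct computation, splitting over whether a vertex lies in $A$ or not, gives
\[ \|y_A\|^2 = |A|(1-\alpha)^2 + (n - |A|)\alpha^2 = n\alpha(1-\alpha), \]
and analogously $\|y_B\|^2 = n\beta(1-\beta)$. Substituting these two norms and dividing through by $dn$ yields the claimed inequality.

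The derivation is essentially a one-page linear-algebra calculation, so I do not expect a serious obstacle; the only point requiring care is verifying both eigenvector properties of $\mathbf{1}$ used in the decomposition, since this is the single place where the (bi-)regularity of $\Gamma$ actually enters. Everything else reduces to the universal bound $\|y M\| \leq \lambda(\Gamma)\|y\|$ for $y \perp u$ built into the definition of spectral expansion.
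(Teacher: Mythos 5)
Your argument is correct, and it is essentially the proof of the cited source: the paper itself gives no proof of this proposition, quoting it directly as \cite[Lemma~4.15]{pseudo}, and the standard argument there is exactly your decomposition \( \mathbf{1}_A = \alpha\mathbf{1} + y_A \), \( \mathbf{1}_B = \beta\mathbf{1} + y_B \) followed by Cauchy--Schwarz and the contraction \( \|y_A M\| \leq \lambda(\Gamma)\|y_A\| \). The one point you rightly single out is the left-eigenvector identity \( \mathbf{1}M = \mathbf{1} \): the paper's definition of ``regular'' literally only fixes out-degrees, but stationarity of the uniform distribution (equivalently, in-degree \( d \) at every vertex) is implicitly assumed both in \cite{pseudo} and in the paper's remark that \( u \) is an eigenvector of \( M \), and it holds for all the Cayley digraphs to which the lemma is applied, so your proof is complete in the intended setting.
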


For a finite group \( G \) and a subset \( S \subseteq G \), let \( \Cay(G, S) \) denote the \emph{directed Cayley graph} on \( G \) with the connection set \( S \).
The set of vertices is \( G \), and we draw an arc from \( g \in G \) to \( h \in G \) if and only if \( g^{-1}h \in S \).

Let \( \Irr(G) \) denote the set of irreducible ordinary characters of a group \( G \), and let \( 1_G \) denote the trivial character.
Next result shows that spectral expansion of normal Cayley graphs is controlled by character ratios, cf.~\cite[(4.6)]{lieExpand}.

\begin{lemma}\label{specchi}
	Let \( G \) be a finite group with a normal subset \( S \subseteq G \). Then
	\[ \lambda(\Cay(G, S)) \leq \max_{g \in S} R(g). \]
	Moreover, if \( S \) is a conjugacy class, then we have an equality in the formula above.
\end{lemma}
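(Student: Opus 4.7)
The plan is to diagonalize the random-walk matrix of $\Cay(G, S)$ simultaneously with the regular representation of $G$, using the fact that normal subsets give rise to central elements of the group algebra.

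First, let $\rho$ denote the right regular representation of $G$ on $\mathbb{C}[G]$, so that the random-walk matrix of $\Cay(G, S)$ is
\[ M = \frac{1}{|S|} \sum_{s \in S} \rho(s). \]
Because $S$ is a union of conjugacy classes, the element $\sum_{s \in S} s$ lies in the center of $\mathbb{C}[G]$, so $M$ commutes with every $\rho(g)$, $g \in G$. By the Wedderburn decomposition, $\mathbb{C}[G]$ splits as a direct sum of $\chi$-isotypic components over $\chi \in \Irr(G)$, and by Schur's lemma $M$ acts on each such component as the scalar
\[ \mu_\chi = \frac{1}{|S|\,\chi(1)} \sum_{s \in S} \chi(s). \]
For the trivial character one computes $\mu_{1_G} = 1$, corresponding to the all-ones (uniform) eigenvector $u$; for every other $\chi$, the triangle inequality gives $|\mu_\chi| \leq \max_{s \in S} |\chi(s)/\chi(1)| \leq \max_{s \in S} R(s)$.

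Next, I would check that $M$ is a normal operator, which is what lets me pass from eigenvalues of $MM^t$ back to eigenvalues of $M$. Since $S$ is a union of conjugacy classes, so is $S^{-1}$, and $M^* = \frac{1}{|S|}\sum_{s \in S}\rho(s^{-1})$ is again central in the regular representation; therefore $M M^* = M^* M$. A normal matrix is unitarily diagonalizable, so on $u^\perp$ its operator norm equals its second largest eigenvalue in absolute value. Combined with the formula of~\cite[Problem~2.10~(2)]{pseudo}, this yields
\[ \lambda(\Cay(G, S)) = \max_{\chi \neq 1_G} |\mu_\chi| \leq \max_{s \in S} R(s), \]
which is the desired inequality.

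For the equality statement, if $S = g^G$ is a single conjugacy class then $\chi$ is constant on $S$, so $\mu_\chi = \chi(g)/\chi(1)$ for every $\chi$. Taking the maximum over $\chi \neq 1_G$ gives $\lambda(\Cay(G, g^G)) = R(g)$, which equals $\max_{s \in S} R(s)$ because $R$ is a class function. The only step that requires a little care is the normality argument: one must verify that $M$ is genuinely normal in order to equate spectral expansion (defined via $\sqrt{\lambda_2(MM^t)}$ in the directed case) with the second-largest eigenvalue of $M$ in absolute value. Everything else is a direct application of the standard relationship between central elements of $\mathbb{C}[G]$ and irreducible characters.
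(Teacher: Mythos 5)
Your proof is correct and follows essentially the same route as the paper: both exploit that the class sum of a normal subset is central in \( \mathbb{C}[G] \) to get the eigenvalues \( \mu_\chi = \frac{1}{|S|\chi(1)}\sum_{s\in S}\chi(s) \), and both use the resulting normality of \( M \) (i.e.\ \( MM^t = M^tM \)) to identify \( \lambda(\Cay(G,S)) \) with \( \max_{\chi\neq 1_G}|\mu_\chi| \), the only cosmetic difference being that you derive the eigenvalue formula from Schur's lemma while the paper cites it from the literature.
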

\begin{proof}
	Let \( M \) denote the random-walk matrix of \( \Cay(G, S) \).
	Define two linear operators \( \mathcal{M}, \mathcal{M}^* : \mathbb{C}[G] \to \mathbb{C}[G] \)
	on the group algebra \( \mathbb{C}[G] \) as follows:
	\[ \mathcal{M}(h) = \frac{1}{|S|} \sum_{s \in S} s^{-1}h\, \text{ and } \mathcal{M}^*(h) = \frac{1}{|S|} \sum_{s \in S} sh, \]
	for \( h \in \mathbb{C}[G] \). It can be easily seen that the matrices of \( \mathcal{M} \) and \( \mathcal{M}^* \)
	in the standard basis are \( M \) and \( M^t \) respectively. Since \( S \) is a normal subset of \( G \), the sums \( \sum_{s \in S} s^{-1} \) and \( \sum_{s \in S} s \)
	lie in the center of the group algebra and hence \( \mathcal{M}(\mathcal{M}^*(h)) = \mathcal{M}^*(\mathcal{M}(h)) \) for all \( h \in \mathbb{C}[G] \).
	Therefore \( MM^t = M^tM \), and hence the eigenvalues of \( MM^t \) are \( |\lambda|^2 \) where \( \lambda \) runs through the eigenvalues of~\( M \).

	Set \( R = \max_{g \in S} R(g) \).
	It is well known (see, for instance,~\cite[Theorem~1]{zie}) that eigenvalues of \( M \) have the form
	\[ \lambda_\chi = \frac{1}{\chi(1)|S|} \sum_{g \in S} \chi(g) \]
	where \( \chi \) ranges over \( \Irr(G) \). The largest eigenvalue is \( \lambda_{1_G} = 1 \), so for \( \chi \neq 1_G \) we have
	\[ |\lambda_\chi| \leq \frac{1}{\chi(1)|S|} \sum_{g \in S} |\chi(g)| \leq
	\max_{g \in S} \left| \frac{\chi(g)}{\chi(1)} \right| \sum_{g \in S} \frac{1}{|S|} \leq \max_{g \in S} R(g) = R. \]
	Now, it follows that the second largest eigenvalue \( \lambda_2 \) of \( MM^t \)
	is at most \( R^2 \). Hence \( \lambda(\Cay(G, S)) = \sqrt{\lambda_2} \leq R \) as claimed.

	If \( S \) is a conjugacy class \( g^G \), then \( \lambda_\chi = \chi(g)/\chi(1) \) and hence \( \lambda(\Cay(G, S)) = R \) as wanted.
\end{proof}

The following is the main character ratio bound that we will be using.

\begin{proposition}[Gluck's bound~{\cite{gluck}}]\label{gluckB}
	Let \( G \) be a finite simple group of Lie type over the field of order~\( q \).
	For any \( \chi \in \Irr(G) \), \( \chi \neq 1_G \), and any \( x \in G \), \( x \neq 1 \)
	we have \( |\chi(x)/\chi(1)| \leq C/\sqrt{q} \) for a universal constant \( C > 0 \).
\end{proposition}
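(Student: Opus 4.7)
The plan is to reduce the bound to estimates on Deligne--Lusztig characters, since the naive argument via orthogonality is too weak for groups of rank at least two. As a starting point, the column orthogonality relation \( \sum_{\chi \in \Irr(G)} |\chi(x)|^2 = |C_G(x)| \) yields
\[ \left| \frac{\chi(x)}{\chi(1)} \right| \leq \frac{\sqrt{|C_G(x)|}}{\chi(1)}, \]
which, combined with Landazuri--Seitz lower bounds on the minimum character degree, handles the rank one case \( G = \mathrm{PSL}_2(q) \) and any element with small centralizer. However, for an element such as a transvection in \( \mathrm{PSL}_n(q) \), one has \( |C_G(x)| \) of order \( q^{\dim G - 2(n-1)} \) while the smallest nontrivial degree is only of order \( q^{n-1} \), and the naive ratio grows with \( q \). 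A finer tool is required.

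The next step is to partition \( \Irr(G) \) into rational Lusztig series \( \mathcal{E}(G, [s]) \) indexed by semisimple conjugacy classes \( [s] \) in the dual group \( G^* \). For \( \chi \in \mathcal{E}(G, [s]) \) one has \( \chi(1) = [G^* : C_{G^*}(s)]_{p'} \cdot \psi(1) \) for a unipotent character \( \psi \) of \( C_{G^*}(s) \), and Lusztig's character formula expresses \( \chi(x) \) for \( x = su \) (Jordan decomposition) in terms of Green functions of Levi subgroups in \( C_G(s) \). The goal is to use this factorization to cancel the large factor \( [G^* : C_{G^*}(s)]_{p'} \) against the size of the centralizer of \( x \) appearing in the Murnaghan--Nakayama-style bound on Green functions.

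The case analysis that I would carry out runs along the Jordan decomposition. For regular semisimple \( x \) the character values simplify drastically, essentially to values of Weyl-group characters, and the bound follows immediately from the degree estimates. For unipotent \( x \) one invokes uniform bounds on Green functions \( Q_T^G(x) \), which are polynomials in \( q \) of degree at most \( (\dim G - \dim x^G)/2 \), and compares with the Steinberg-type lower bound \( \chi(1) \geq q^{(\dim G - \mathrm{rk}\, G)/2} \) valid for nonlinear \( \chi \). Mixed elements reduce to these two situations inside the centralizer \( C_G(s) \).

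The main obstacle, and the reason Gluck's proof is delicate, is controlling the small-degree characters (such as constituents of permutation representations on parabolic subgroups or other characters close to the principal series) on elements \( x \) whose centralizer is close in size to the maximum possible in \( G \), since here both the numerator and denominator of the ratio are simultaneously near their extremes. For such pairs \( (\chi, x) \) one has to exploit the explicit combinatorial description of \( \chi \) coming from the Hecke algebra of the corresponding parabolic, together with sharp bounds on \( Q_T^G \), to extract the crucial factor of \( q^{-1/2} \).
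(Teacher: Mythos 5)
This statement is not proved in the paper at all: it is quoted verbatim from Gluck's article \cite{gluck} and used as a black box, so there is no internal argument for you to match, and a genuine proof would amount to reproving a substantial paper of Gluck. Measured against that standard, what you have written is a programme rather than a proof. Your final paragraph concedes exactly the hard case (small-degree characters evaluated at elements with near-maximal centralizers) and says only that ``one has to exploit'' Hecke-algebra descriptions and sharp Green function bounds to extract the factor \( q^{-1/2} \); but extracting that factor in precisely those cases \emph{is} the theorem, so the argument as written has a gap at its center. None of the quantitative inputs you invoke (uniform bounds on \( Q_T^G \), the cancellation of \( [G^*:C_{G^*}(s)]_{p'} \) against centralizer orders, the reduction of mixed elements to \( C_G(s) \) with uniform constants) is justified, and making them uniform over all types, all ranks and all \( q \) is where the real work lies.

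There is also a concrete error: you assert a ``Steinberg-type lower bound \( \chi(1) \geq q^{(\dim G - \mathrm{rk}\, G)/2} \) valid for nonlinear \( \chi \).'' That exponent is the degree of the Steinberg character, i.e.\ the \emph{largest} unipotent-character scale, not a lower bound; by Landazuri--Seitz the minimal nontrivial degree is only of order \( q^{\mathrm{rk}\, G} \) (e.g.\ about \( q^{n-1} \) for \( \mathrm{PSL}_n(q) \), versus \( q^{(n^2-n)/2} \) in your claim). Since your treatment of unipotent elements compares Green function values against this purported lower bound, that branch of the case analysis collapses, and it is exactly the branch containing transvection-like elements that you correctly identified at the outset as the obstruction to the naive centralizer bound. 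So the proposal identifies the right difficulties and roughly the right toolkit (Lusztig series, Jordan decomposition of characters, Green functions), but it neither closes the critical case nor has correct supporting estimates; for the purposes of this paper the honest course is to cite \cite{gluck}, as the author does.
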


\section{Proofs of main results}\label{sProofs}

\noindent\emph{Proof of Theorem~\ref{2step}.}
Let \( g \in A \) be such that \( R = R(g) \).
Set \( \Gamma = \Cay(G, g^G) \). We have \( |AB| \geq |Bg^G| \geq |N(B)| \), where \( N(B) \) is the neighborhood
of \( B \) in \( \Gamma \). By Proposition~\ref{specExp}, the digraph \( \Gamma \) is \( (\alpha n, 1/((1-\alpha)\lambda(\Gamma)^2 + \alpha)) \)-expander
for any \( 0 \leq \alpha \leq 1 \).
Take \( \alpha = |B|/n \) and recall that \( \lambda(\Gamma) \leq R \) by Lemma~\ref{specchi}. Now we have
\[ |AB| \geq |N(B)| \geq \frac{|B|}{(1 - |B|/n)R^2 + |B|/n} = \frac{n}{1 + R^2(n/|B| - 1)}. \]
It is easy to see that the right hand side is at least \( \min\{ n/2,\, |B|/(2R^2) \} \), so the claim follows.
\qed
\medskip

\noindent\emph{Proof of Corollary~\ref{gowers2}.}
We apply Theorem~\ref{2step} to sets \( g^G \) and \( B^{-1} \). Let \( n = |G| \). Recalling that \( |B| = |B^{-1}| \) we obtain
\[ |g^G \cdot B^{-1}| \geq \frac{n}{1 + R(g)^2(n/|B|-1)} > n(1 - R(g)^2(n/|B| - 1)). \]
By assumption, \( |A| \cdot |B| \geq n^2R(g)^2 \geq nR(g)^2(n - |B|) \), hence
\[ |g^G \cdot B^{-1}| > n(1 - R(g)^2(n/|B| - 1)) \geq n - |A|. \]
Therefore \( |g^G \cdot B^{-1}| + |A| > n \), and so \( g^G \cdot B^{-1} \cap A \neq \varnothing \). It follows that \( AB \cap g^G \neq \varnothing \).
If \( A \) and \( B \) are normal, then so is \( AB \) and the final claim follows.
\qed
\medskip

\noindent\emph{Proof of Corollary~\ref{lstImp}.}
We apply Corollary~\ref{gowers2} and note that for any \( g \in G \setminus \{ 1 \} \) we have \( R(g) \leq C/\sqrt{q} \) for some universal constant \( C \)
by Proposition~\ref{gluckB}.
\qed
\medskip

\noindent\emph{Proof of Corollary~\ref{pyberConj}.}
Recall that the group \( G \) of Lie type of bounded rank defined over the field of order \( q \) has order at most \( q^c \) for some constant \( c \) depending on the rank only.
Corollary~\ref{lstImp} implies that if \( A \subseteq G \) with \( A^{-1} = A \) satisfies \( |A| \geq \sqrt{C/q} \cdot |G| \), then \( A^2 = G \).
Since \( \log_2 |G| \leq c \log_2 q \leq \sqrt{q / C} \) for \( q \) large enough, we have \( |A| > |G| / \log_2 |G| \geq \sqrt{C/q} \cdot |G| \),
so the conditions of Corollary~\ref{lstImp} are satisfied and \( A^2 = G \) as claimed.
\qed
\medskip

\noindent\emph{Proof of Theorem~\ref{asymp}.}
Consider \( \Gamma = \Cay(G, g^G) \) for \( g \in G \), and recall that \( \lambda(\Gamma) \leq R(g) \) by Lemma~\ref{specchi}.
By Proposition~\ref{expMix} applied to \( \Gamma \) and subsets \( A^{-1} \) and \( B \) we get
\[ \left| \frac{e(A^{-1}, B)}{|g^G| \cdot |G|} - \frac{|A^{-1}| \cdot |B|}{|G|^2} \right| < \frac{R(g)}{|G|} \sqrt{|A^{-1}| \cdot |B|}. \]
Notice that there is an arc between an element \( a \in A^{-1} \) and \( b \in B \) if and only if \( a^{-1}b \in g^G \).
In particular, \( e(A^{-1}, B) = |\{ (a, b) \in A \times B \mid ab \in g^G \}| \). Since \( A \) and \( B \) are normal subsets,
and since all elements of \( g^G \) are conjugate, the number of decompositions of the form \( x = ab \), \( a \in A \), \( b \in B \),
does not depend on the choice of \( x \in g^G \). Therefore
\[ P_{A,B}(g) = \frac{e(A^{-1},B)}{|g^G| \cdot |A| \cdot |B|}. \]
We derive
\[ \left| \frac{P_{A,B}(g) \cdot |A^{-1}| \cdot |B|}{|G|} - \frac{|A| \cdot |B|}{|G|^2} \right| < \frac{R(g)}{|G|} \sqrt{|A^{-1}| \cdot |B|}, \]
and the theorem follows after dividing the inequality by \( |A| \cdot |B|/|G| \) and replacing \( |A^{-1}| \) by \( |A| \).
\qed
\medskip

\noindent\emph{Proof of Corollary~\ref{asympBig}.}
Follows directly from Theorem~\ref{asymp} and Proposition~\ref{gluckB}. \qed
\medskip

\noindent\emph{Proof of Corollary~\ref{words}.}
By~\cite{larsenWords}, there exists a constant \( \epsilon > 0 \) depending on the rank of \( G \) only, such that
\( |w_i(G)| \geq \epsilon |G| \), \( i = 1, 2 \). Now the claim follows from Corollary~\ref{asympBig}. \qed
\medskip

The proof of Theorem~\ref{boundrank} will follow from a more precise result.

\begin{lemma}\label{dich}
	Let \( G \) be a nontrivial finite group, and define
	\[ R = \max_{g \in G\setminus \{ 1 \}} R(g). \]
	Let \( A \) be a nontrivial normal subset of \( G \). If \( |A| \geq R|G| \), then \( G \setminus \{ 1 \} \subseteq A^2 \).
	If \( |A| < R|G| \), then \( |A^2| \geq |A|/(2R) \).
\end{lemma}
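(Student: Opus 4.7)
The plan is to handle the two cases of the statement separately, each by a direct invocation of one of the results already established in the paper.

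For the first case, assuming \( |A| \geq R|G| \), I would apply Corollary~\ref{gowers2} taking both subsets to be \( A \). Given any \( g \in G \setminus \{1\} \) we have \( R(g) \leq R \) by the definition of \( R \), so \( |A|^2 \geq R^2 |G|^2 \geq R(g)^2 |G|^2 \), which is exactly the hypothesis of that corollary. Since \( A \) is normal, Corollary~\ref{gowers2} then yields \( g^G \subseteq A^2 \), and taking the union over all \( g \neq 1 \) gives \( G \setminus \{1\} \subseteq A^2 \).

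For the second case, assuming \( |A| < R|G| \), the natural tool is Theorem~\ref{2step} applied with the role of \( B \) also played by \( A \). Since \( A \) is a nontrivial normal subset, it contains some nonidentity element, so the quantity \( R' = \min_{g \in A} R(g) \) appearing in Theorem~\ref{2step} satisfies \( R' \leq R \). The theorem then gives \( |A^2| \geq \min\{|G|/2,\; |A|/(2R'^2)\} \), and it suffices to verify that both terms are at least \( |A|/(2R) \). The inequality \( |G|/2 \geq |A|/(2R) \) is a direct reformulation of the case hypothesis \( |A| \leq R|G| \), while \( |A|/(2R'^2) \geq |A|/(2R) \) reduces to \( R'^2 \leq R \), which follows from \( R' \leq R \leq 1 \).

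There is no substantive obstacle in the argument; the lemma essentially repackages Corollary~\ref{gowers2} and Theorem~\ref{2step} for the special case of a single normal set squared. The only subtlety to watch is ensuring \( R' \) in Theorem~\ref{2step} is bounded by \( R \), which is precisely where the hypothesis that \( A \) is nontrivial (i.e.\ contains an element other than the identity) enters.
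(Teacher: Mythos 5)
Your proposal is correct and follows essentially the same route as the paper: the first case is exactly the paper's appeal to Corollary~\ref{gowers2}, and the second case invokes Theorem~\ref{2step} with $R' = \min_{g \in A} R(g) \leq R$, the only cosmetic difference being that you use the $\min\{|G|/2,\,|A|/(2R'^2)\}$ form and check both terms against $|A|/(2R)$ (using $R'^2 \leq R' \leq R \leq 1$ and $|A| < R|G|$), whereas the paper manipulates the sharper bound $|G|/(1+R'^2(|G|/|A|-1))$ directly. Your observation that nontriviality of $A$ is what guarantees $R' \leq R$ is exactly the point implicitly used in the paper.
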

\begin{proof}
	If \( |A| \geq R|G| \), then the claim follows from Corollary~\ref{gowers2}.
	Assume that \( |A| < R|G| \). Define \( R' = \min_{g \in A} R(g) \), and observe that \( R' \leq R \).
	By Theorem~\ref{2step}, we have
	\[ |A^2| \geq \frac{|G|}{1 + R'^2(|G|/|A| - 1)} \geq \frac{|A|}{|A|/|G| + R^2(1 - |A|/|G|)} \geq \frac{|A|}{R + R^2} \geq \frac{|A|}{2R}, \]
	and the claim follows.
\end{proof}

\noindent\emph{Proof of Theorem~\ref{boundrank}.}
Let \( G \) be a finite simple group of Lie type of rank \( r \) defined over the field of order \( q \).
We apply Lemma~\ref{dich}. If \( |A| \geq R|G| \), then \( G \setminus \{ 1 \} \subseteq A^2 \), so we may assume that \( |A| < R|G| \).
Recall that \( R \leq C/\sqrt{q} \) for some universal constant \( C \) by Proposition~\ref{gluckB},
so \( |A^2| \geq |A|/(2R) \geq |A|\sqrt{q}/(2C) \). Now, \( |G| \leq q^{cr^2} \) for some universal constant \( c \),
hence \( \sqrt{q}/(2C) \geq |G|^\epsilon \geq |A|^\epsilon \), where \( \epsilon = c'/r^2 \) for some universal constant \( c' > 0 \).
Therefore \( |A^2| \geq |A|^{1+\epsilon} \) and the claim follows.
\qed

\section{Final remarks}\label{remarks}

We have often compared our results with those obtained by Babai, Nikolov and Pyber in~\cite{babnikpy},
and it turns out that the proof method employed there is also related to expander graphs.
To state the main result of~\cite{babnikpy}, recall that given a finite group \( G \), a function
\( X : G \to \mathbb{R}_{\geq 0} \) is called a \emph{probability distribution} on~\( G \), if
\( \sum_{g \in G} X(g) = 1 \). Given two distributions \( X, Y : G \to \mathbb{R}_{\geq 0} \), their \emph{convolution} \( X * Y : G \to \mathbb{R}_{\geq 0} \)
is also a distribution defined as \( (X*Y)(h) = \sum_{g \in G} X(g)Y(g^{-1}h) \).
For a function \( f : G \to \mathbb{R} \), let \( \| f \| = \sqrt{\sum_{g \in G} |f(g)|^2 }\) denote the \( \ell_2 \)-norm.
Now, if \( U : G \to \mathbb{R}_{\geq 0} \) is the uniform distribution on \( G \), then~\cite[Theorem~2.1]{babnikpy} gives the following inequality:
\[ \| X*Y - U \| \leq \sqrt{n/m} \cdot \|X - U \| \cdot \|Y - U \|, \tag{\(*\)} \]
where \( n \) is the order of \( G \) and \( m \) is the minimum degree of a nontrivial ordinary representation of \( G \).
All the expansion results in~\cite{babnikpy} follow from this convolution inequality, for instance, to obtain two-step expansion~\cite[Corollary~2.4]{babnikpy}
(which we improve for normal subsets in Theorem~\ref{2step}), inequality~\((*)\) is applied to distributions concentrated on subsets \( A, B \subseteq G \).

We claim that this inequality is equivalent to a spectral expansion bound for a certain weighted Cayley digraph.
Keeping the same notation for the group and the distributions, let \( \Gamma \) be a complete Cayley digraph on \( G \),
where an arc between \( x \in G \) and \( y \in G \) has weight \( Y(x^{-1}y) \). The random-walk matrix \( M \) of this digraph
is a \( n \times n \) matrix with entries \( M_{x,y} = Y(x^{-1}y) \); here we identify indices used with the group elements.
By~\cite[Section~4.1.2]{pseudo}, the spectral expansion of this digraph is equal to
\[ \lambda(\Gamma) = \max_{\pi} \frac{\| \pi M - u \|}{\| \pi - u \|}, \]
where the vector \( u = (1/n, \dots, 1/n) \in \mathbb{R}^n \) corresponds to the uniform distribution,
and \( \pi \) ranges over all vectors from \( \mathbb{R}_{\geq 0}^n \) with \( \sum_g \pi_g = 1 \), in other words,
\( \pi \) ranges over arbitrary probability distributions on \( G \).
Now, if one takes \( \pi_g = X(g) \), one can easily check that \( (\pi M)_g = (X*Y)(g) \). Therefore \( \| \pi M - u \| = \| X*Y - U \| \)
and \( \| \pi - u \| = \| X - U \| \). Plugging this into the formula for \( \lambda(\Gamma) \) above and recalling that \( X \) in \( (*) \)
is an arbitrary distribution, we obtain the following bound for the spectral expansion of \( \Gamma \):
\[ \lambda(\Gamma) \leq \sqrt{n/m} \cdot \|Y - U \|. \tag{\(*'\)} \]
Since \( X \) in~\((*)\) was arbitrary, we can see that \((*)\) and \((*')\) are, in fact, equivalent.

We note that the proof of~\cite[Theorem~2.1]{babnikpy} makes use of a bound on a second largest eigenvalue of a certain matrix, see~\cite[Lemma~5.7]{babnikpy},
so the relation between~\((*)\) and spectral expansion is rather natural, albeit slightly hidden by the formulation of the main result of~\cite{babnikpy}.

To illustrate our point even further, we provide an alternative proof of two-step expansion obtained in~\cite[Corollary~2.5]{babnikpy}, using the spectral expansion.

\begin{proposition}[{Alternative proof of~\cite[Corollary~2.5]{babnikpy}}]
	Let \( A \) and \( B \) be nonempty subsets of a finite group \( G \) of order \( n \),
	and let \( m \) be the minimum degree of a nontrivial ordinary representation of \( G \). Then
	\[ |AB| > \frac{n}{1 + \frac{n^2}{m|A||B|}} \geq \min \left\{ \frac{n}{2},\, \frac{m|A||B|}{2n} \right\}. \]
\end{proposition}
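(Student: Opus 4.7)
The plan is to specialize the equivalence between \((*)\) and \((*')\) observed above to the case where the distribution \(Y\) is uniformly concentrated on \(B\). Concretely, set \(Y(g) = 1/|B|\) for \(g \in B\) and \(Y(g) = 0\) otherwise. With this choice the weighted Cayley digraph attached to \(Y\) in the preceding discussion is precisely the ordinary Cayley digraph \(\Gamma = \Cay(G, B)\), which is a \(|B|\)-regular digraph on \(n\) vertices, so the inequality \((*')\) reads \(\lambda(\Gamma) \leq \sqrt{n/m}\cdot \|Y - U\|\).

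A direct computation gives
\[ \|Y - U\|^2 = |B|\left(\frac{1}{|B|} - \frac{1}{n}\right)^2 + (n - |B|)\cdot \frac{1}{n^2} = \frac{n - |B|}{n|B|}, \]
and hence \(\lambda(\Gamma)^2 \leq (n-|B|)/(m|B|)\). Since the out-neighborhood of \(A\) in \(\Cay(G, B)\) is exactly \(AB\), I would then apply Proposition~\ref{specExp} with \(\alpha = |A|/n\) to conclude
\[ |AB| \geq \frac{|A|}{(1 - |A|/n)\lambda(\Gamma)^2 + |A|/n} \geq \frac{m n |A| |B|}{(n - |A|)(n - |B|) + m|A||B|}. \]

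The first inequality of the statement then follows from \((n - |A|)(n - |B|) = n^2 - n(|A|+|B|) + |A||B| < n^2\), where strictness is ensured by \(|A|, |B| \geq 1\), which forces \(n(|A|+|B|) = n|A| + |B|(n-|A|) + |B||A| > |A||B|\). The second inequality is a one-line case distinction: if \(m|A||B| \geq n^2\) then \(1 + n^2/(m|A||B|) \leq 2\) and the expression is at least \(n/2\); otherwise \(n^2/(m|A||B|) > 1\), so the denominator is bounded above by \(2n^2/(m|A||B|)\) and the expression strictly exceeds \(m|A||B|/(2n)\). I do not foresee any real obstacle: the nontrivial conceptual step — the identification of the convolution inequality \((*)\) with the spectral expansion bound \((*')\) — has already been carried out in the preceding paragraphs, and the remainder of the proof is a plug-and-play application of Proposition~\ref{specExp} to \(\Cay(G, B)\) with the specific spectral bound produced by choosing \(Y\) uniform on \(B\).
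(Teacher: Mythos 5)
Your proof is correct and follows essentially the same route as the paper's: take \(Y\) uniform on \(B\), bound \(\lambda(\Cay(G,B))\) via \((*')\) by \(\sqrt{n/m}\cdot\|Y-U\|\), and feed this into Proposition~\ref{specExp} with \(\alpha=|A|/n\). The only difference is cosmetic bookkeeping — you keep the exact term \((n-|A|)(n-|B|)\) before bounding it by \(n^2\), whereas the paper bounds \(\lambda^2\le n/(m|B|)\) and \(n/|A|-1<n/|A|\) directly — and both yield the stated strict inequality.
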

\begin{proof}
	We follow the proof of Theorem~\ref{2step}. Set \( \Gamma = \Cay(G, B) \).
	The digraph \( \Gamma \) is a \( (\alpha n, (1-\alpha)\lambda(\Gamma)^2 + \alpha) \)-expander
	for \( \alpha = |A|/n \). Hence
	\[ |AB| \geq |N(A)| \geq \frac{|A|}{(1 - |A|/n)\lambda(\Gamma)^2 + |A|/n} = \frac{n}{(n/|A| - 1)\lambda(\Gamma)^2 + 1}. \]
	Let \( Y : G \to \mathbb{R}_{\geq 0} \) be a distribution on \( G \) defined as \( Y(g) = 1/|B| \) if \( g \in B \) and \( Y(g) = 0 \) otherwise.
	By~\((*')\), we have
	\[ \lambda(\Gamma) \leq \sqrt{n/m} \cdot \| Y - U \| = \sqrt{n/m}\cdot \sqrt{1/|B| - 1/n} \leq \sqrt{\frac{n}{m|B|}}. \]
	Hence
	\[ |AB| \geq \frac{n}{(n/|A| - 1)\lambda(\Gamma)^2 + 1} > \frac{n}{n/|A| \cdot n/(m|B|) + 1}, \]
	as claimed. The inequality with the minimum follows trivially.
\end{proof}

Gowers' trick for three subsets follows from this result, so it is also a corollary of expansion properties of Cayley graphs.

Of course, we do not claim that the convolution inequality~\((*)\) is rendered unnecessary by the more popular
machinery of spectral expansion of Cayley graphs. For instance, the fact that~\((*)\) works for arbitrary distributions \( X \) and \( Y \)
allows us to iterate the inequality and obtain bounds on convolutions of many distributions, see~\cite[Corollary~2.2 and Corollary~2.3]{babnikpy}.
We also note that although Lemma~\ref{specchi} gives a precise formula for spectral expansion with respect to a conjugacy class,
for arbitrary normal subsets the bound~\( (*') \) can turn out to be better.

We finally mention that some recent strong results on expansion of normal subsets are formulated in terms of convolutions, see~\cite[Theorem~1.4]{lifshHyper}.

\section{Acknowledgements}

The author would like to thank prof.\ A.A.~Gal't, L.~Pyber and A.V~Vasil'ev for fruitful discussions improving the paper considerably.

\bigskip

\noindent
\emph{Saveliy V. Skresanov}

\noindent
\emph{Sobolev Institute of Mathematics, 4 Acad. Koptyug avenue, Novosibirsk, Russia}

\noindent
\emph{Email address: skresan@math.nsc.ru}

\end{document}